\def\T{\mathbb{T}}
\def\dom{\mathop{\mathrm{dom}}\nolimits}
\def\bd{\mathrm{bd}}
\def\cl{\mathrm{cl}}
\def\int{\mathrm{int}}
\def\GG{\mathcal{G}}
\def\HH{\mathcal{H}}
\def\SSC{\mathcal{S}}
\def\XD{X^{\sharp}}
\theoremstyle{plain}
\newtheorem{theorem}{Theorem}
\newtheorem{lemma}{Lemma}
\theoremstyle{definition}
\title{ Every Separable Metrizable Space has a Proper Dyadic Subbase}
\author{Haruto Ohta}
\address[H. Ohta]{Faculty of Education, Shizuoka University}
\author{Hideki Tsuiki}
\address[H. Tsuiki]{Graduate School of Human and Environmental Studies, Kyoto University}
\author{Kohzo Yamada}
\address[K. Yamada]{Faculty of Education, Shizuoka University}
\begin{document}

\begin{abstract}
The notions of a proper dyadic subbase and an independent subbase was introduced by H. Tsuiki to investigate
in $\{0, 1,\bot\}^\infty$-codings of topological spaces.
We show that every separable metrizable space $X$ has a proper dyadic subbase whose restriction to the perfect set $\XD$ defined by the Cantor-Bendixson theorem forms an independent subbase of $\XD$.
\end{abstract}

\maketitle

\section{Introduction}

The notion of a proper dyadic subbase was introduced  in \cite{T}
to investigate in $\{0, 1,\bot\}^\infty$-codings of topological spaces, and 
domain representations induced by a proper dyadic subbase are investigated
in \cite{TT}.
It is shown in \cite{OTY} that every dense-in-itself separable metrizable space
has an independent subbase, which is a proper dyadic subbase with some
additional properties.
In this article, we generalize this result and show that 
every separable metrizable space $X$ has a proper dyadic subbase $S$ whose
restriction to the perfect set $\XD$ defined by the Cantor-Bendixson theorem
forms an independent subbase of $\XD$ when $\XD \ne \emptyset$.

It is also proved in \cite{OTY}  that if $X$ is a dense-in-itself separable metrizable
space with $\dim X = m$, then $X$ has an independent subbase $\SSC$ with 
$\deg\, \SSC = m$.  We also generalize this result and show that 
if $X$ is a separable metrizable space with $\dim X = m$, then $X$ has a proper dyadic
subbase $\SSC$ with the above property such that $\deg\, \SSC = m$.  

\medbreak

\noindent
{\bf Definitions:}

A {\it dyadic subbase} of a space $X$ is a subbase
$\SSC = \{S_{n,i}: n < \omega, i <2\}$ indexed with $\omega \times 2$ 
which is composed of regular open sets and
$S_{n,1}$ is the exterior of $S_{n,0}$ for each $n < \omega$.
Let $\T = \{0,1,\bot\}$, $\dom(\sigma) = \{n : \sigma(n) \in \{0,1\}\}$ for $\sigma \in \T^{\omega}$, and $\T^{*}  = \{\sigma\in \T^{\omega} : |\dom(\sigma)| < \infty\}$.
For a dyadic subbase $\SSC= \{S_{n,i}: n < \omega, i <2\}$ and $\sigma \in \T^{\omega}$, let 
\begin{align*}
&S(\sigma)=\bigcap_{k \in \dom(\sigma)}S_{k, \sigma(k)}\quad
\text{and}\quad 
\bar{S}(\sigma)=\bigcap_{k \in \dom(\sigma)} {\cl\, S_{k,\sigma(k)}}
\end{align*}
denote the corresponding subsets of $X$.
The family $\{S(\sigma): \sigma \in \T^{*}\}$ is the base generated by $\SSC$.
We say that a dyadic subbase $\SSC = \{S_{n,i}: n < \omega, i <2\}$ is {\em proper} if 
$\cl\, S(\sigma)=\bar{S}(\sigma)$ for every $\sigma \in \T^*$.  

An {\em independent subbase} is a dyadic subbase 
$\SSC = \{S_{n,i}: n < \omega, i <2\}$ such that $S(\sigma)$ is not empty
for every $\sigma \in \T^*$.  An independent subbase is always proper \cite{OTY}.

For a dyadic subbase $\SSC=\{S_{n,i}:n<\omega, i<2\}$ of 
a separable metric space $X$ and $x \in X$, we define
$\deg_{\mathcal{S}}(x) = |\{n<\omega :  x \in X\setminus(S_{n,0}\cup S_{n,1})\}|$ and
$\deg \SSC = \sup \{\deg_{\mathcal{S}}(x) : x \in X \}$.

\section{Topological Lemmas}

A space $X$ is called {\it hereditarily paracompact} if 
every subspace of $X$ is paracompact.

\begin{lemma}
Let $X$ be a hereditarily paracompact, Hausdorff space
and $Y$ a closed subspace of $X$.
For every open sets $U_0$ and $U_1$ in $Y$ such that
$U_0\cap U_1=\emptyset$, there exist
open sets $V_0$ and $V_1$ in $X$ such that $V_i\cap Y=U_i$
for each $i=0,1$ and 
$\cl_XV_0\cap\cl_XV_1\subseteq Y$.
\end{lemma}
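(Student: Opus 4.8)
\emph{Proof plan.}
The strategy is to delete from $X$ the ``common boundary'' of $U_0$ and $U_1$ and then apply normality to the remaining open subspace. Concretely, I would set $B=\cl_Y U_0\cap\cl_Y U_1$. Since $Y$ is closed in $X$, each $\cl_Y U_i$ equals $\cl_X U_i$ and is closed in $X$, so $B$ is a closed subset of $X$ with $B\subseteq Y$. Because $U_0,U_1$ are disjoint open subsets of $Y$ we have $\cl_Y U_0\cap U_1=\emptyset=\cl_Y U_1\cap U_0$, whence $B\cap U_i=\emptyset$ for $i=0,1$. I would also fix, using only that each $U_i$ is open in the subspace $Y$, open sets $O_0,O_1$ in $X$ with $O_i\cap Y=U_i$.

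Next I would pass to the open subspace $W=X\setminus B$. By hereditary paracompactness together with the Hausdorff hypothesis, $W$ is paracompact and Hausdorff, hence normal. In $W$ the sets $C_i=\cl_X U_i\cap W=\cl_Y U_i\setminus B$ are closed, and $C_0\cap C_1=(\cl_Y U_0\cap\cl_Y U_1)\setminus B=\emptyset$ — this is exactly the point of removing $B$. Applying normality of $W$ (in the usual strengthened form, obtained by a double application, that disjoint closed sets admit open neighbourhoods with disjoint closures), I obtain open subsets $H_0,H_1$ of $W$, hence of $X$ since $W$ is open in $X$, with $C_i\subseteq H_i$ and $\cl_W H_0\cap\cl_W H_1=\emptyset$.

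Finally I would put $V_i=O_i\cap H_i$, which is open in $X$. Since $U_i\subseteq O_i$ and $U_i\subseteq C_i\subseteq H_i$ we have $U_i\subseteq V_i$, while $V_i\subseteq O_i$ forces $V_i\cap Y\subseteq O_i\cap Y=U_i$, so $V_i\cap Y=U_i$. For the closures, $V_i\subseteq H_i$ gives $\cl_X V_i\subseteq\cl_X H_i$, and from $\cl_W H_i=\cl_X H_i\cap W$ together with $\cl_W H_0\cap\cl_W H_1=\emptyset$ we get $\cl_X H_0\cap\cl_X H_1\subseteq X\setminus W=B\subseteq Y$; hence $\cl_X V_0\cap\cl_X V_1\subseteq Y$. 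The one delicate point is keeping $V_i\cap Y$ equal to $U_i$ and not merely a larger open subset of $Y$, which is precisely why the separating sets $H_i$ are cut down by the prescribed extensions $O_i$; at the conceptual level, the key observation is that deleting the common boundary $B$ is what turns the merely disjoint open sets $U_0,U_1$ into disjoint \emph{closed} subsets of $W$, after which normality does the rest.
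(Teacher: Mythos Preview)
Your argument is correct. Each step checks out: $B=\cl_X U_0\cap\cl_X U_1$ is closed in $X$ and misses both $U_i$, so $W=X\setminus B$ is an open, hence paracompact Hausdorff, hence normal subspace in which the $C_i=\cl_X U_i\cap W$ are disjoint closed sets; separating them with $W$-open sets having disjoint $W$-closures and then intersecting with arbitrary open extensions $O_i$ gives exactly the required $V_i$.

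The paper proceeds differently. It first invokes hereditary normality (via \cite[Theorem 2.1.7]{E}) to extend $U_0,U_1$ to \emph{disjoint} open sets $W_0,W_1$ in $X$, then works in the open subspace $Z=W_0\cup W_1\cup(X\setminus Y)$, covers $Z$ by small neighbourhoods $G_x$ with $\cl_X G_x\subseteq W_i$ together with $X\setminus Y$, takes a locally finite refinement $\mathcal H$ by paracompactness of $Z$, and sets $V_i=\bigcup\{H\in\mathcal H:H\cap U_i\neq\emptyset\}$; local finiteness then forces $\cl_Z V_i\subseteq W_i$, so the closures can meet only outside $Z$, i.e.\ inside $Y$. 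Compared with this, your route is more economical: you remove only the common boundary $B$ rather than all of $Y\setminus(U_0\cup U_1)$, you avoid the citation for disjoint extensions (any open extensions $O_i$ suffice), and you use normality of the open subspace directly instead of an explicit locally finite refinement. The paper's argument, on the other hand, is a bit more hands-on and makes the role of paracompactness visible through the refinement construction.
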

\begin{proof}
By \cite[Theorem 2.1.7]{E}, there exist open sets $W_0$ and $W_1$ in
$X$ such that $W_i\cap Y=U_i$ for each $i=0,1$ and
$W_0\cap W_1=\emptyset$.
Put
$$
Z=W_0\cup W_1\cup(X\setminus Y).
$$
For each $i=0,1$ and each $x\in U_i$, 
since every paracompact Hausdorff space is regular,
there is an open neighborhood $G_x$ of $x$ in $X$ such that
$\cl_XG_x\subseteq W_i$.
Then the collection
$\GG=\{G_x:x\in U_0\cup U_1\}\cup\{X\setminus Y\}$ is an 
open cover of $Z$.
Since $Z$ is paracompact, 
$\GG$ has a locally finite (in $Z$) open refinement $\HH$.
Define 
$$
V_i=\bigcup\{H\in\HH:H\cap U_i\neq\emptyset\}
$$ 
for each $i=0,1$. 
Then, $V_i$ is open in $X$ since $Z$ is open in $X$,
and $V_i\cap Y=U_i$ for each
$i=0,1$.
If $H\in\HH$ and $H\cap U_i\neq\emptyset$, then 
$H\subseteq G_x$ for some $x\in U_i$, which implies that
$\cl_ZH\subseteq\cl_XH\subseteq\cl_XG_x\subseteq W_i$.
By this fact and local finiteness of $\HH$ in $Z$, 
$$
\cl_XV_i\cap Z=\cl_ZV_i
=\bigcup\{\cl_ZH:H\in\HH, H\cap U_i\neq\emptyset\}\subseteq W_i
$$
for each $i=0,1$.
Hence, $\cl_XV_0\cap\cl_XV_1\cap Z\subseteq W_0\cap W_1=\emptyset$,
which implies that 
$\cl_XV_0\cap\cl_XV_1\subseteq Y$.
\end{proof}

Let $X$ be a separable metric space.
By the Cantor-Bendixson theorem, there exists a perfect 
($=$ dense in itself) closed set $\XD$ in $X$ 
such that $X\setminus \XD$ is at most countable.
Note that $\dim(X\setminus \XD)=0$.
By a {\it clopen set} we mean an open and closed set.
We now call a subset $G$ of $X$ a {\it half-clopen set}
in $X$ if $G$ is regular open in $X$ and 
$\bd_XG\subseteq \XD$.

\begin{lemma}
Let $X$ be a separable metric space, $U$ a regular open set in $\XD$
and $W$ an open set in $X$ with $\cl_{\XD}U\subseteq W$.
Then, there exists a half-clopen set $V$ in $X$ such that
$V\cap \XD=U$ and $V\subseteq W$.
\end{lemma}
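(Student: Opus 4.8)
The plan is to extend $U$ to an open subset of $X$ with the help of Lemma 1 and then correct it inside the set $Z:=X\setminus\XD$, which is open in $X$, at most countable, and satisfies $\dim Z=0$. Since $X$ is metrizable it is hereditarily paracompact, and $U_0:=U$, $U_1:=\XD\setminus\cl_\XD U$ are disjoint open subsets of the closed set $\XD$. By Lemma 1 there are open sets $V_0,V_1$ in $X$ with $V_i\cap\XD=U_i$ for $i=0,1$ and $\cl_XV_0\cap\cl_XV_1\subseteq\XD$. Because $U\subseteq\XD$ we have $\cl_XU=\cl_\XD U\subseteq W$, so normality of $X$ lets me choose an open set $W_1$ with $\cl_XU\subseteq W_1\subseteq\cl_XW_1\subseteq W$; replacing $V_0$ by $V_0\cap W_1$ preserves $V_0\cap\XD=U$ (as $U\subseteq\cl_XU\subseteq W_1$) and $\cl_XV_0\cap\cl_XV_1\subseteq\XD$, and yields in addition $\cl_XV_0\subseteq W$.

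Next I would put $A_i:=V_i\cap Z$ and $O:=W\setminus\cl_XV_1$. Then $A_0,A_1$ are open in $Z$ with $\cl_ZA_0\cap\cl_ZA_1=\emptyset$ (since $\cl_ZA_i\subseteq\cl_XV_i$ and $\cl_XV_0\cap\cl_XV_1\subseteq\XD$), and $\cl_ZA_0\subseteq\cl_XV_0\cap Z\subseteq O$. The crucial observation is that $\cl_XO\cap\XD\subseteq\cl_\XD U$: the set $O$ misses the open set $V_1$, hence so does $\cl_XO$, whereas every point of $\XD\setminus\cl_\XD U$ lies in $V_1\cap\XD\subseteq V_1$. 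Now $\cl_ZA_0$ and $(Z\setminus O)\cup\cl_ZA_1$ are disjoint closed subsets of $Z$, so, $\dim Z$ being $0$, I can pick a clopen set $C$ in $Z$ with $\cl_ZA_0\subseteq C\subseteq(O\cap Z)\setminus\cl_ZA_1$.

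Finally I would set $V:=V_0\cup C$ and verify that it is the required half-clopen set. It is open in $X$ (because $Z$ is open in $X$), it satisfies $V\cap\XD=(V_0\cap\XD)\cup(C\cap\XD)=U$, and $V\cap Z=A_0\cup C=C$ since $A_0\subseteq\cl_ZA_0\subseteq C$; thus $V\cap Z$ is clopen in $Z$, which gives $\bd_XV\subseteq\XD$, while $V_0\subseteq W_1$ and $C\subseteq O$ give $V\subseteq W$. For regular openness I would use $\cl_XV=\cl_XV_0\cup\cl_XC=\cl_\XD U\cup\cl_XC$ (here $V_0=U\cup A_0$ and $A_0\subseteq C$), so that $\cl_XV\cap Z=\cl_ZC=C$ and $\cl_XV\cap\XD\subseteq\cl_\XD U\cup(\cl_XC\cap\XD)\subseteq\cl_\XD U$ by the crucial observation. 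Hence a point of $\int_X\cl_XV$ lying in $Z$ lies in $\int_ZC=C$, and a point lying in $\XD$ has a $\XD$-neighbourhood contained in $\cl_\XD U$ and so belongs to $\int_\XD\cl_\XD U=U$ because $U$ is regular open in $\XD$; therefore $\int_X\cl_XV=C\cup U=V$.

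I expect this last verification to be the main obstacle: merely extending $U$ to an open set of $X$ and taking a regularization need not preserve the trace on $\XD$, because the part of the set lying in $Z$ may cluster at points of $\XD\setminus U$. Passing the $Z$-part through the auxiliary set $O=W\setminus\cl_XV_1$, whose closure meets $\XD$ only within $\cl_\XD U$ thanks to $V_1$, is exactly what prevents this while simultaneously keeping $V$ inside $W$.
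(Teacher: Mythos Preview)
Your proof is correct and follows essentially the same route as the paper: apply Lemma~1 to obtain $V_0,V_1$, then exploit $\dim(X\setminus\XD)=0$ together with the separation from $\cl_XV_1$ to adjust the $Z$-part by a clopen set. The only cosmetic differences are that the paper splits the clopen correction into two pieces $H_0$ and $H_1$ (first to make $V_0\cup H_0$ half-clopen, then to force containment in $W$), whereas you pre-shrink $V_0$ via normality and choose a single clopen $C\subseteq O=W\setminus\cl_XV_1$; your explicit verification of regular openness via $\cl_XV\cap\XD=\cl_{\XD}U$ and $\cl_XV\cap Z=C$ is exactly what underlies the paper's unproved ``observe that $V_0\cup H_0$ is a half-clopen set.''
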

\begin{proof}
By Lemma 1, there exist open sets $V_0$ and $V_1$ in $X$
such that 
$$
V_0\cap \XD=U,\ V_1\cap \XD=\XD\setminus\cl_{\XD}U\  
\text{ and }\ \cl_XV_0\cap\cl_XV_1\subseteq \XD.
$$
Since $X\setminus \XD$ is 0-dimensional, 
there exists a clopen set $H_0$ in $X\setminus \XD$ such that 
$\cl_XV_0\cap(X\setminus \XD)\subseteq H_0$ and
$H_0\cap\cl_XV_1=\emptyset$.
Observe that $V_0\cup H_0$ is a half-clopen set in $X$.
Put $F=H_0\setminus W$.
Since $F$ is closed in $X$ and $F\cap \XD=\emptyset$,
there is an open set $G$ in $X$ such that $F\subseteq G$
and $\cl_XG\cap \XD=\emptyset$.
Since $X\setminus \XD$ is 0-dimentional, 
there is a clopen set $H_1$ in $X\setminus \XD$ with 
$F\subseteq H_1\subseteq G$.
Then, $H_1$ is clopen in $X$.
Finally, we obtain a required half-clopen set $V$ 
in $X$ by letting $V=(V_0\cup H_0)\setminus H_1$.
\end{proof}

\section{Main Theorems}

\begin{theorem}
Let $X$ be a separable metric space.
There exists a proper dyadic subbase $\SSC$ 
consisting of half-clopen sets in $X$ such that
$\{S\cap \XD:S\in\SSC\}$ is an independent subbase of $\XD$
when $\XD\neq\emptyset$.
\end{theorem}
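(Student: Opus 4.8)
The plan is to split $X$ into the perfect part $\XD$ and the at most countable remainder $X\setminus\XD$, to treat the two pieces separately, and then to glue them — the gluing being exactly what Lemmas 1 and 2 are for. If $\XD=\emptyset$, then $X$ is at most countable, hence $0$-dimensional, and taking a countable base $\{B_n:n<\omega\}$ of clopen sets and setting $S_{n,0}=B_n$, $S_{n,1}=X\setminus B_n$ produces a proper dyadic subbase of clopen — hence half-clopen — sets, with nothing further to check. So assume $\XD\neq\emptyset$. Since $\XD$ is dense-in-itself, separable and metrizable, \cite{OTY} provides an independent (hence proper) subbase $\{T_{n,i}:n<\omega,\ i<2\}$ of $\XD$. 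I will build $\SSC=\{S_{n,i}\}$ by extending each pair $(T_{n,0},T_{n,1})$ to a pair of half-clopen subsets of $X$ with $S_{n,1}$ the exterior of $S_{n,0}$ and, crucially, $\cl_XS_{n,0}\cap\XD=\cl_{\XD}T_{n,0}$; this last condition forces $S_{n,i}\cap\XD=T_{n,i}$ for $i<2$ and $\bd_XS_{n,0}\cap\XD=\bd_{\XD}T_{n,0}$, so that $\{S_{n,i}\cap\XD\}$ is once again the dyadic subbase $\{T_{n,i}\}$.

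For a fixed $n$, write $Q=\XD\setminus\cl_{\XD}T_{n,0}$ for the exterior of $T_{n,0}$ in $\XD$. First I choose an open set $W$ in $X$ with $T_{n,0}\subseteq W$, $W\cap\XD=T_{n,0}$ and $\cl_XW\cap Q=\emptyset$: since $\cl_XT_{n,0}$ and $\cl_XQ$ are closed in $X$ and meet precisely in $\bd_{\XD}T_{n,0}$, the set $W=\{x\in X\setminus\bd_{\XD}T_{n,0}:d(x,\cl_XT_{n,0})<d(x,\cl_XQ)\}$ does the job. Now I run the construction in the proof of Lemma 2 with $U=T_{n,0}$, carrying out the steps that take place in $X\setminus\XD$ inside $W$ and using the $0$-dimensionality of $X\setminus\XD$; this yields a half-clopen $S_{n,0}$ with $S_{n,0}\cap\XD=T_{n,0}$ and $S_{n,0}\setminus\XD\subseteq W$, whence $\cl_XS_{n,0}\cap\XD\subseteq(\cl_XW\cap\XD)\cup\cl_{\XD}T_{n,0}=\cl_{\XD}T_{n,0}$, and the reverse inclusion is automatic. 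I then put $S_{n,1}=X\setminus\cl_XS_{n,0}$; it is regular open, half-clopen (its boundary equals $\bd_XS_{n,0}\subseteq\XD$), and $S_{n,1}\cap\XD=\XD\setminus\cl_{\XD}T_{n,0}=T_{n,1}$.

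The family $\SSC=\{S_{n,i}\}$ is then a dyadic subbase of $X$ consisting of half-clopen sets whose restriction to $\XD$ is the independent subbase $\{T_{n,i}\}$; that $\SSC$ generates the topology of $X$ away from $\XD$ is arranged using that $X\setminus\XD$ is $0$-dimensional and lies at positive distance from the closed set $\XD$, together with the freedom remaining in the sets $S_{n,i}\cap(X\setminus\XD)$. Properness of $\SSC$ is then immediate from properness of $\{T_{n,i}\}$: given $\sigma\in\T^{*}$ and $x\in\bar{S}(\sigma)=\bigcap_{k\in\dom\sigma}\cl_XS_{k,\sigma(k)}$, if $x\in\XD$ then, since $\cl_XS_{k,i}\cap\XD=\cl_{\XD}T_{k,i}$, properness of $\{T_{n,i}\}$ gives $x\in\cl_{\XD}\bigl(S(\sigma)\cap\XD\bigr)\subseteq\cl_XS(\sigma)$; and if $x\notin\XD$ then, as each $S_{k,\sigma(k)}$ is half-clopen, $x\in\cl_XS_{k,\sigma(k)}\cap(X\setminus\XD)=S_{k,\sigma(k)}\cap(X\setminus\XD)$ for every $k$, so $x\in S(\sigma)$. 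In either case $x\in\cl_XS(\sigma)$, so $\cl_XS(\sigma)=\bar{S}(\sigma)$.

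The hard part is the closure-matching condition $\cl_XS_{n,0}\cap\XD=\cl_{\XD}T_{n,0}$. A careless half-clopen extension of $T_{n,0}$ — such as the one a naive application of Lemma 2 gives — may accumulate at points of $\XD$ lying outside $\cl_{\XD}T_{n,0}$; then the exterior of $S_{n,0}$ would meet $\XD$ in a proper subset of $T_{n,1}$, and $\{S_{n,i}\cap\XD\}$ would fail to be even a dyadic subbase of $\XD$. Preventing the part of $S_{n,0}$ lying outside $\XD$ from accumulating on $\XD$ in the wrong place — that is, reconciling the topology of $X$ near $\XD$ with the subspace topology of the closed set $\XD$ — is exactly what Lemmas 1 and 2 are built to do, and is where the technical weight of the proof lies.
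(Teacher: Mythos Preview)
Your argument has a genuine gap: you never verify that $\SSC$ generates the topology of $X$ at points of $\XD$, and your construction does not guarantee it. You only discuss generating the topology ``away from $\XD$'', and even there you assert that $X\setminus\XD$ ``lies at positive distance from the closed set $\XD$'', which is false in general (e.g.\ $X=[0,1]\cup\{1+1/n:n\ge1\}$). The real difficulty is at points $x\in\XD$: given a neighbourhood $G$ of $x$ in $X$, knowing that some finite intersection $\bigcap_k T_{n_k,\sigma(k)}$ lies in $G\cap\XD$ says nothing about whether $\bigcap_k S_{n_k,\sigma(k)}\subseteq G$, because the portion of each $S_{n_k,\sigma(k)}$ lying in $X\setminus\XD$ is essentially uncontrolled in your construction. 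Your set $W=\{y:d(y,\cl_X T_{n,0})<d(y,\cl_X Q)\}$ constrains $S_{n,0}\setminus\XD$ only in the ``horizontal'' direction along $\XD$, not in how far it extends into $X\setminus\XD$; a point of $X\setminus\XD$ can lie in every relevant $S_{n_k,\sigma(k)}$ while sitting outside $G$. Adding clopen sets from a base of $X\setminus\XD$ (as the paper does) handles the topology \emph{on} $X\setminus\XD$, but does not by itself repair the local base at points of $\XD$, since the bad set $\cl_X S(\sigma)\setminus G$ need not be coverable by finitely many such clopen sets.

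This is exactly why the paper does not extend each $T_{n,i}$ independently. Instead it re-enters the inductive construction of the independent subbase from \cite{OTY}: before the induction starts, one fixes open sets $U_{n,1}^\ast$ in $X$ with $U_{n,1}^\ast\cap\XD=U_{n,1}$ such that the pairs $(U_{n,0},U_{n,1}^\ast)$ already witness neighbourhoods in $X$ at every point of $\XD$; then at stage $n$ one uses Lemma~2 to choose the half-clopen extensions $V_n^\ast$ and $G^\ast(\sigma)$ \emph{inside} $U_{n,1}^\ast$ (respectively inside $V_n^\ast\cap S^\ast(\sigma)$ or $S^\ast(\sigma)\setminus\cl_X V_n^\ast$), so that the resulting $S_{n,i}^\ast$ satisfy the $X$-version of the base condition~(\ref{eq:2-2c}). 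That coupling between the choice of extensions and the neighbourhood system of $X$ is the missing ingredient in your proposal; your closure-matching condition and your properness argument are fine, but they do not substitute for it.
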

\begin{proof}
If $\XD=\emptyset$, then for every countable base 
$\{B_n:n<\omega\}$ of $X$ consisting of clopen sets,
$\{B_n,X\setminus B_n:n<\omega\}$ is a required proper 
dyadic subbase of $X$.
So, we assume that $\XD\neq\emptyset$.
In \cite{OTY} we proved that every dense in itself, separable
metric space has an independent subbase.
By applying it to $\XD$, we can define an independent subbase
$\SSC=\{S_{n,i}:n<\omega, i<2\}$ of $\XD$.
The proof begins by taking a collection
$\{U_{n,i}:n<\omega,i<2\}$ of non-empty regular open sets 
in $\XD$ such that $\cl_{\XD}U_{n,0}\subseteq U_{n,1}$
for each $n<\omega$ and, for each $x\in \XD$ and each
neighborhood $G$ of $x$ in $\XD$, 
there is $n<\omega$ such that
$x\in U_{n,0}$ and $U_{n,1}\subseteq G$.
Now, we can assume, in addition, that
for each $n<\omega$ there is an open set $U_{n,1}^\ast$ in
$X$ with $U_{n,1}^\ast\cap \XD=U_{n,1}$ 
and for each $x\in \XD$ and each neighborhood $G$ of $x$ in $X$,
there is $n<\omega$ such that
$x\in U_{n,0}$ and $U_{n,1}^\ast\subseteq G$.
Then, the independent subbase $\SSC$ of $\XD$ is  
defined by induction on $n<\omega$ as follows.
At the first step, $S_{0,0}$ and $S_{0,1}$ are defined to be
$U_{0,0}$ and $X\setminus\cl_{\XD}U_{0,0}$, respectively.
At the $n$-th step, $S_{n,0}$ and $S_{n,1}$ are defined
so as to satisfy that
\begin{gather}
S_{n,1}=\XD\setminus\cl_{\XD}S_{n,0}, 
\label{eq:2-1a}\\
(\forall\sigma\in{^{n+1}2})\left(
\cl_{\XD}\bigcap_{k\leq n}S_{k,\sigma(k)}=
\bigcap_{k\leq n}\cl_{\XD}S_{k,\sigma(k)}\right), 
\label{eq:2-1b}\\
(\forall\sigma\in{^{n+1}2})\left(
\bigcap_{k\leq n}S_{k,\sigma(k)}\neq\emptyset\right),\textrm{ and }
\label{eq:2-1c}\\
(\forall x\in U_{n,0})(\exists\,\Gamma\subseteq{n+1})(\exists 
\sigma\in{^\Gamma 2})
\left(x\in\bigcap_{k\in\Gamma}S_{k,\sigma(k)}\subseteq U_{n,1}\right).
\label{eq:2-1d}
\end{gather}
If such $S_{n,0}$ and $S_{n,1}$ were defined for all $n<\omega$,
then it follows from (1)--(4) that the resulting collection
$\SSC=\{S_{n,i}:n<\omega,i<2\}$ is an independent subbase of $\XD$.
After having defined $\SSC$, we extend, 
by induction on $n<\omega$ again, each $S_{n,i}\in\SSC$ to 
a half-clopen set $S_{n,i}^\ast$ in $X$ satisfying that
\begin{gather}
S_{n,1}^\ast=X\setminus\cl_XS_{n,0}^\ast, 
\label{eq:2-2a}\\
(\forall i<2)\left(S_{n,i}^\ast\cap \XD=S_{n,i}\right), 
\text{ and}
\label{eq:2-2b}\\
(\forall x\in U_{n,0})(\exists\,\Gamma\subseteq{n+1})
(\exists \sigma\in{^\Gamma 2})
\left(x\in\bigcap_{k\in\Gamma}S_{k,\sigma(k)}^\ast
\subseteq U_{n,1}^\ast\right).
\label{eq:2-2c}
\end{gather}
By Lemma 2, there is a half-clopen set $S_{0,0}^\ast$ in
$X$ such that $S_{0,0}^\ast\cap \XD=S_{0,0}$ and
$S_{0,0}^\ast\subseteq U_{0,1}^\ast$.
Put $S_{0,1}^\ast=X\setminus\cl_XS_{0,0}^\ast$.
Then $S_{0,1}^\ast$ is also a half-clopen set in $X$ with
$S_{0,1}^\ast\cap \XD=S_{0,1}$.
Let $n<\omega$ and assume that a half-clopen set $S_{k,i}^\ast$
has been defined for each $k<n$ and each $i<2$.
For each $\sigma\in{^n}2$, let
$$
S(\sigma)=\bigcap_{k<n}S_{k,\sigma(k)}\quad
\text{and}\quad 
S^\ast(\sigma)=\bigcap_{k<n}S_{k,\sigma(k)}^\ast.
$$
Then $S^\ast(\sigma)$ is a half-clopen set in $X$ with
$S^\ast(\sigma)\cap \XD=S(\sigma)$ by (\ref{eq:2-1b}).
At the $n$-th step in the proof of \cite[Theorem 1]{OTY},
the sets $S_{n,i}$, $i<2$, are defined as 
\begin{align*}
S_{n,0} &=\left(V_n\setminus\bigcup_{\sigma\in A}\cl_{\XD}G(\sigma)\right)\cup
\bigcup_{\sigma\in B}G(\sigma)\textrm{ and } \\
S_{n,1} &=\left((\XD\setminus\cl_{\XD}V_n)\setminus
\bigcup_{\sigma\in B}\cl_{\XD}G(\sigma)\right)\cup
\bigcup_{\sigma\in A}G(\sigma),
\end{align*}
where $V_n$ is a carefully chosen (using \cite[Lemma 10]{OTY}) regular 
open set in $\XD$ with 
$\cl_{\XD}U_{n,0}\subseteq V_n\subseteq \cl_{\XD}V_n\subseteq U_{n,1}$,
$$
A=\{\sigma\in{^n2}:S(\sigma)\subseteq V_n\},\quad
B=\{\sigma\in{^n2}:S(\sigma)\cap\cl_{\XD}V_n=\emptyset\},
$$ and
for each $\sigma\in A\cup B$, $G(\sigma)$ is a non-empty regular open
set in $\XD$ such that
$\cl_XG(\sigma)\subseteq S(\sigma)$ and
$S(\sigma)\setminus\cl_XG(\sigma)\neq\emptyset$.
By Lemma 2, there is a half-clopen set $V_n^\ast$ such that
$$
V_n^\ast\cap \XD=V_n\quad\text{and}\quad V_n^\ast\subseteq U_{n,1}^\ast.
$$
Moreover, for each $\sigma\in A$, there is a half-clopen set
$G^\ast(\sigma)$ such that $G^\ast(\sigma)\cap \XD=G(\sigma)$
and $G^\ast(\sigma)\subseteq V_n^\ast\cap S^\ast(\sigma)$,
and, for each $\sigma\in B$,
there is a half-clopen set $G^\ast(\sigma)$ 
such that $G^\ast(\sigma)\cap \XD=G(\sigma)$ and 
$G^\ast(\sigma)\subseteq S^\ast(\sigma)\setminus\cl_XV_n^\ast$.
Define
\begin{align*}
S_{n,0}^\ast &=\left(V_n^\ast\setminus
\bigcup_{\sigma\in A}\cl_XG^\ast(\sigma)\right)\cup
\bigcup_{\sigma\in B}G^\ast(\sigma)\textrm{ and } \\
S_{n,1}^\ast &=\left((X\setminus\cl_XV_n^\ast)\setminus
\bigcup_{\sigma\in B}\cl_XG^\ast(\sigma)\right)\cup
\bigcup_{\sigma\in A}G^\ast(\sigma).
\end{align*}
Then, $S_{n,i}^\ast$, $i<2$, are half-clopen sets and satify
(\ref{eq:2-2a}) and (\ref{eq:2-2b}).
By a similar argument to the proof in \cite{OTY} that
$S_{n,0}$ and $S_{n,1}$ satisfy (\ref{eq:2-1d}),
it can be checked that $S_{n,0}^\ast$ and $S_{n,1}^\ast$
satisfy (\ref{eq:2-2c}).
Hence, the induction is complete.
It follows from the additional assumption on the sets 
$U_{n,1}^\ast$ and (\ref{eq:2-2c}),
for each $x\in \XD$ and each neighborhood $G$ of $x$ in $X$,
there is a finite set $\Gamma\subseteq\omega$ and
$\sigma\in{^\Gamma}2$ such that
$$
x\in\bigcap_{k\in\Gamma}S_{k,\sigma(k)}^\ast\subseteq G.
$$
Finally, since $X\setminus \XD$ is $0$-dimensional
and $\XD$ is closed in $X$, 
there is a countable base
$\{H_n:n<\omega\}$ of $X\setminus \XD$ consisting of clopen sets
in $X$.
Then, 
$$
\SSC^\ast=\{S_{n,0}^\ast,S_{n,1}^\ast:n<\omega,i<2\}\cup
\{H_n,X\setminus H_n:n<\omega\}
$$ is a required proper
dyadic subbase of $X$.
\end{proof}

\par\bigskip

\begin{theorem}
Let $X$ be a separable metric space with $\dim\,X=m$.
Then, $X$ has a proper dyadic subbase $\SSC$ with $\deg\,\SSC=m$.
\end{theorem}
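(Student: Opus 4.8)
The plan is to reduce to the perfect core $\XD$, apply the degree-controlled form of the construction of \cite{OTY} to $\XD$, and then run the extension procedure of Theorem 1. If $\XD=\emptyset$, then $X$ is at most countable, so $\dim X=0=m$ (for $X\ne\emptyset$) and the clopen subbase $\{B_n,X\setminus B_n:n<\omega\}$ exhibited at the start of the proof of Theorem 1 has $\deg\,\SSC=0=m$. So assume $\XD\ne\emptyset$. The first point is that $\dim\XD=\dim X=m$: the set $\XD$ is closed in $X$, so $\dim\XD\le\dim X$, while $X$ is the union of the closed set $\XD$ and the at most countably many singletons forming $X\setminus\XD$, each closed in $X$ and $0$-dimensional, so the countable sum theorem for $\dim$ in separable metrizable spaces (see \cite{E}) gives $\dim X=\max\{\dim\XD,0\}=\dim\XD$.

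Next I would rerun the proof of Theorem 1 with one change. There the independent subbase $\SSC=\{S_{n,i}:n<\omega,i<2\}$ of $\XD$ is obtained by re-executing the construction of \cite[Theorem 1]{OTY}; since \cite{OTY} shows that, when $\dim\XD=m$, this construction can in addition be arranged so that $\deg\,\SSC=m$, and since that refinement is carried out inside the same induction that yields the bookkeeping conditions \eqref{eq:2-1a}--\eqref{eq:2-1d}, I may take $\SSC$ with $\deg\,\SSC=m$ from the outset. The extension procedure of Theorem 1 then proceeds verbatim and produces half-clopen sets $S_{n,i}^\ast$ in $X$ with $S_{n,i}^\ast\cap\XD=S_{n,i}$ and $S_{n,1}^\ast=X\setminus\cl_X S_{n,0}^\ast$, together with clopen sets $H_n$, so that $\SSC^\ast=\{S_{n,i}^\ast:n<\omega,i<2\}\cup\{H_n,X\setminus H_n:n<\omega\}$ is a proper dyadic subbase of $X$. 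Its properness, the subbase property and the identities $S_{n,i}^\ast\cap\XD=S_{n,i}$ are already established in the proof of Theorem 1, so nothing new is needed there.

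It then remains to compute $\deg\,\SSC^\ast$. For each $n$, $S_{n,0}^\ast\cup S_{n,1}^\ast=X\setminus\bd_X S_{n,0}^\ast$, and each $H_n$ is clopen, so $\deg_{\SSC^\ast}(x)=|\{n<\omega:x\in\bd_X S_{n,0}^\ast\}|$. Since each $S_{n,0}^\ast$ is half-clopen, $\bd_X S_{n,0}^\ast\subseteq\XD$, hence $\deg_{\SSC^\ast}(x)=0$ for $x\in X\setminus\XD$. The key claim is that $\bd_X S_{n,0}^\ast=\bd_{\XD}S_{n,0}$. One inclusion is immediate: if $x\in\bd_{\XD}S_{n,0}=\cl_{\XD}S_{n,0}\setminus S_{n,0}$ then $x\in\cl_{\XD}S_{n,0}\subseteq\cl_X S_{n,0}^\ast$ and $x\notin S_{n,0}^\ast$ (using $x\in\XD$ and $S_{n,0}^\ast\cap\XD=S_{n,0}$), so $x\in\bd_X S_{n,0}^\ast$. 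For the converse, since $\bd_X S_{n,0}^\ast\subseteq\XD$ and $S_{n,0}^\ast$ is open, it suffices to show $\cl_X S_{n,0}^\ast\cap\XD\subseteq\cl_{\XD}S_{n,0}$. Here the special form of the sets is used: for a half-clopen set $V$ produced by Lemma 2 with $V\cap\XD=U$, any $y\in\XD\setminus\cl_{\XD}U$ lies in the open set $V_1$ of Lemma 1, which by the choices made in Lemmas 1 and 2 is disjoint from $V_0\cup H_0\supseteq V$, so $y\notin\cl_X V$; thus $\cl_X V\cap\XD=\cl_{\XD}U$, and this identity propagates through the operations by which $S_{n,0}^\ast$ is assembled from $V_n^\ast$ and the $G^\ast(\sigma)$ --- removing a finite union of closures of such sets and adjoining a finite union of such sets --- thanks to the pairwise disjointness of the level-$n$ atoms $S^\ast(\sigma)$. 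Granting the claim, $\deg_{\SSC^\ast}(x)=\deg_{\SSC}(x)$ for $x\in\XD$, so $\deg\,\SSC^\ast=\sup\{\deg_{\SSC}(x):x\in\XD\}=\deg\,\SSC=m$.

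The main obstacle is precisely the last step --- showing that the identity $\cl_X S_{n,0}^\ast\cap\XD=\cl_{\XD}S_{n,0}$ survives the Boolean combination defining $S_{n,0}^\ast$, and not just for the pieces coming directly from Lemma 2. This amounts to checking that the class of half-clopen sets $V$ satisfying $\cl_X V\cap\XD=\cl_{\XD}(V\cap\XD)$ is closed under the specific operations used in the construction; each step reduces to the corresponding regular-open identity inside $\XD$ together with the fact that every boundary taken lies in $\XD$, and the disjointness of the relevant atoms (inherited from the bookkeeping of \cite{OTY}) keeps the computation finite. Everything else in the argument is routine or already supplied by the proof of Theorem 1.
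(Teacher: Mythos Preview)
Your overall strategy matches the paper's exactly: handle $\XD=\emptyset$ separately, invoke the degree-controlled result of \cite{OTY} on $\XD$ (you even supply the justification $\dim\XD=\dim X$ via the countable sum theorem, which the paper leaves implicit), and then run the extension procedure of Theorem~1.

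The only divergence is in the degree computation, where you manufacture an obstacle that is not really there. You aim for $\cl_X S_{n,0}^\ast\cap\XD=\cl_{\XD}S_{n,0}$ and then worry about pushing this closure identity through the Boolean combination defining $S_{n,0}^\ast$ from $V_n^\ast$ and the $G^\ast(\sigma)$. But the equality $\bd_X S_{n,0}^\ast=\bd_{\XD}S_{n,0}$ that you actually need follows immediately from \eqref{eq:2-2a}, \eqref{eq:2-2b} and half-clopenness alone: since $S_{n,1}^\ast=X\setminus\cl_X S_{n,0}^\ast$ one has $\bd_X S_{n,0}^\ast=X\setminus(S_{n,0}^\ast\cup S_{n,1}^\ast)$, and this set lies in $\XD$ by half-clopenness, so
\[
\bd_X S_{n,0}^\ast=\XD\setminus\bigl((S_{n,0}^\ast\cap\XD)\cup(S_{n,1}^\ast\cap\XD)\bigr)=\XD\setminus(S_{n,0}\cup S_{n,1})=\bd_{\XD}S_{n,0}.
\]
This is precisely the content of the paper's one-line remark that each member of $\SSC^\ast$ is half-clopen or clopen; no inspection of the internals of Lemma~2 or of the explicit construction of $S_{n,0}^\ast$ is required.
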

\begin{proof}
If $\XD=\emptyset$,
then for every countable base 
$\{B_n:n<\omega\}$ of $X$ consisting of clopen sets,
$\SSC=\{B_n,X\setminus B_n:n<\omega\}$ is a proper 
dyadic subbase of $X$ with $\deg\,\SSC=0$.
So, we consider the case $\XD\neq\emptyset$.
Then, by \cite[Corollary 4]{OTY}, there exists an independent
subbase $\SSC$ of $\XD$ with $\deg\,\SSC=m$.
By the same argument as in the proof of Theorem 1, we can
extend $\SSC$ to a proper dyadic subbase $\SSC^\ast$ of $X$.
Then, $\deg\,\SSC^\ast=\deg\,\SSC=m$, since
each member of $\SSC$ is either a half-clopen set
or a clopen set in $X$. 
\end{proof}

\bibliographystyle{abbrvnat}

\end{document}